\documentclass[12pt,a4paper]{article}

\usepackage{amsmath,amssymb}
\usepackage{amsthm}

\newtheorem{proposition}{Proposition}

\newtheorem{theorem}{Theorem}

\title{Periodic linear discrete multitime diagonal recurrence with application in economics}

\author{Cristian Ghiu, Raluca Tulig\u{a}, \\Constantin Udri\c ste, Ionel \c Tevy}
\date{}

\begin{document}

\maketitle

\begin{abstract}
Floquet theory, first published in $1883$ for periodic linear
differential equations, is extended in this paper to multitime
diagonal recurrences. We find explicitly a monodromy matrix, and
we comment its eigenvalues (called Floquet multipliers). The
Floquet point of view brings about an important simplification:
the initial linear diagonal recurrence system is reduced to
a linear recurrence system, with constant coefficients along ``diagonal lines".
The results are applied to the discrete multitime Samuelson-Hicks models, with constant,
respectively multi-periodic, coefficients, in order to find bivariate sequences with economic meaning.
For constant coefficients case, it was found also the generating function;
for multi-periodic coefficients case, we determined the Floquet multipliers.
\end{abstract}


{\bf AMS Subject Classification (2010)}: 39A06, 65Q99.

{\bf Keywords}: multivariate linear recurrence,
formal power series, multivariate generating functions, multitime Floquet theory.

\section{Discrete multitime recurrences}

The multivariate recurrences are based on multiple sequences and come from areas like analysis of algorithms,
computational biology, information theory, queueing theory, filters theory, statistical physics etc.

We consider the lattice of points with integer positive coordinates in $\mathbb{R}^n$.
A multi-variate recurrence is a set of rules which transfer a point into another, together
with initial conditions, capable to cover the hole lattice.

A way to study a multiple sequence is to apply a transform (generating
function, Mellin transform etc), yielding a multi-variable complex function $F$.
We then study the singularities of $F$, its Taylor development around origin, and extract information
on the original multiple sequence via multi-variable complex analysis
(especially the explicit formula of the multivariate sequence).
The advantages of generating functions (GFs) are: provide a compact representation of the multivariate sequence,
are well adapted for computer algebra manipulation (symbolic
computation avoids numerical errors and can yield insight);
can prove identities and simplify sums;
can solve recurrences (difference equations);
can yield recurrences, and hence fast computation algorithms;
allow extraction of asymptotic approximations. The multivariate GF is the best all-round tool
for studying multi-variate recurrences.

Bousquet-M\' elou and Petkov\v sek \cite{BP} showed that while in the univariate case solutions
of linear recurrences with constant coefficients
have rational generating functions (see also \cite{E}), the multivariate case is much richer,
even though initial conditions have rational generating functions the corresponding
solutions can have generating functions which are not rational, algebraic, or
even $D$-finite GF in two or more variables.

Pemantle and Wilson \cite{W} describe a symbolic method for computing generating functions for multivariate recurrences.

Hauser and Koutschan \cite{HK} give theorems of division for formal power series, which clarifies the structural
background and provides short, conceptual proofs for generating functions. In addition, extending the division to
the context of differential operators, the case of recurrences with polynomial coefficients
can be treated in an analogous way.

Floquet theory, first formulated for periodic linear ODEs \cite{Floquet},
was extended to difference equations
(\cite{markley}, \cite{yakubovich}, \cite{M}, \cite{K}).
We have extended this theory to the multitemporal
first order PDEs \cite{U} and now to multivariate multi-periodic recurrences, borrowing mathematical ingredients from some papers
\cite{U1}-\cite{U7} involving multitime (multivariate time) dynamical systems. In Floquet theory it is necessary to find
explicitly the associated monodromy matrix and its eigenvalues (called Floquet multipliers).

\section{Linear discrete multitime diagonal \\recurrence with periodic coefficients}

An element $t=(t^1,\ldots,t^m) \in \mathbb{N}^m$ is called {\it discrete multitime}.
A function of the type $x:\mathbb{N}^m \to \mathbb{R}^n$ is called {\it multivariate sequence}.
Also, for convenience, we denote $\mu(t)=\min \{t^1,t^2,\ldots, t^m\}$
and ${\bf 1}=(1,1,\ldots,1) \in \mathbb{N}^m$.

Let $m\geq2$ and $A \colon \mathbb{N}^m \to \mathcal{M}_{n}(\mathbb{R})$.
Our aim is to continue the study of a linear discrete multitime diagonal recurrence system
(see \cite{GTU})
\begin{equation}
\label{linomog}
x(t+{\bf 1})=A(t)x(t),
\quad
\forall
t \in \mathbb{N}^m.
\end{equation}

In the paper \cite{GTU} one proves the next result.

\begin{theorem}
\label{rec.t1}
Let $m\geq 2$,
$A \colon \mathbb{N}^m \to \mathcal{M}_{n}(\mathbb{R})$,
$b \colon \mathbb{N}^m \to \mathbb{R}^n$. We consider the $(m-1)$-sequences
$f_1, f_2, \ldots, f_m \colon \mathbb{N}^{m-1} \to \mathbb{R}^n$,
such that
\begin{equation}
\label{ect1.1}
f_{\alpha}(t^1,\ldots, \widehat{t^\alpha}, \ldots, t^m)\Big|_{t^\beta=0}
=
f_{\beta}(t^1,\ldots, \widehat{t^\beta}, \ldots, t^m)\Big|_{t^\alpha=0},
\end{equation}
\begin{equation*}
\forall
t^1, \ldots, t^{\alpha-1}, t^{\alpha+1}, \ldots, t^{\beta-1}, t^{\beta+1}, \ldots, t^m
\in \mathbb{N},
\end{equation*}
for any $\alpha,\beta \in \{1, 2, \ldots, m\}$.
Then the unique $m$-sequence
$x \colon \mathbb{N}^m \to \mathbb{R}^n$ which verifies
\begin{equation*}
x(t+{\bf 1})
=
A(t)x(t)+b(t),
\quad \forall t \in \mathbb{N}^m,
\end{equation*}
\begin{equation*}
x(t)\Big|_{t^\beta=0}
=
f_{\beta}(t^1,\ldots, \widehat{t^\beta}, \ldots, t^m),
\quad
\forall
(t^1,\ldots, \widehat{t^\beta}, \ldots, t^m)\in \mathbb{N}^{m-1},
\end{equation*}
\begin{equation*}
\forall
\beta \in \{1,2, \ldots, m\},
\end{equation*}
is defined either by the formula
\begin{equation*}
\begin{split}
&x(t)=
A(t-{\bf 1})A(t-2\cdot {\bf 1})\cdot \ldots
\cdot A(t-t^\beta \cdot  {\bf 1})\cdot\\
& \qquad \cdot
f_{\beta}(t^1-t^\beta, \ldots, \widehat{t^\beta}, \ldots, t^{m-1}-t^\beta)
+b(t-{\bf 1})\\
&+
\sum_{k=2}^{t^\beta}
A(t-{\bf 1})A(t-2\cdot  {\bf 1})\cdot
\ldots \cdot A(t-(k-1)\cdot {\bf 1})b(t-k\cdot {\bf 1}),\\
&\qquad \mbox{ if }\,\,
\mu(t)=t^\beta \geq 2,
\end{split}
\end{equation*}
or by the formula
\begin{equation*}
\begin{split}
x(t)
&=
A(t^1-1, \dots ,t^{\beta-1}-1,0,t^{\beta+1}-1, \ldots t^m-1)
\cdot
\\
& \,\, \cdot
f_{\beta}(t^1-1, \ldots, \widehat{t^\beta}, \ldots, t^{m-1}-1)\\
&+b(t^1-1, \dots ,t^{\beta-1}-1,0,t^{\beta+1}-1, \ldots t^m-1),\\
&\quad \quad  \mbox{ if }\,\,
\mu(t)=t^\beta=1.
\end{split}
\end{equation*}
\hfill$\square$
\end{theorem}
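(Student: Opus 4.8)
The plan is to treat the result as a statement about integrating the diagonal recurrence along the one-dimensional lattice lines in the direction $\mathbf{1}$, anchored at the coordinate hyperplanes $\{t^\beta=0\}$. The organizing observation is that $\mu(t+\mathbf{1})=\mu(t)+1$ and that translation by $\mathbf{1}$ does not change which indices realize the minimum; hence, starting from any $t$ and stepping backward by $\mathbf{1}$, one stays on a single diagonal line and reaches the face $\{t^\beta=0\}$ after exactly $\mu(t)=t^\beta$ steps, landing at $t-t^\beta\mathbf{1}$. Everything below is an induction on the integer $\mu(t)$.

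First I would establish uniqueness. For the base case $\mu(t)=0$ the point $t$ lies on some face $\{t^\beta=0\}$, so the prescribed initial data force $x(t)=f_\beta(\dots)$; when $t$ lies on several faces simultaneously the compatibility relations (\ref{ect1.1}) guarantee that these prescriptions agree, so $x(t)$ is unambiguously determined. For the inductive step, if $\mu(t)\ge 1$ then $\mu(t-\mathbf{1})=\mu(t)-1$, and the recurrence written as $x(t)=A(t-\mathbf{1})x(t-\mathbf{1})+b(t-\mathbf{1})$ expresses $x(t)$ through a value already fixed by the induction hypothesis. Thus any solution is uniquely determined, which simultaneously shows that at most one $m$-sequence can satisfy the system.

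For existence I would take the two displayed formulas as the definition of $x$ and verify that this $x$ solves the problem. The point is that each formula is precisely the result of unfolding $x(t)=A(t-\mathbf{1})x(t-\mathbf{1})+b(t-\mathbf{1})$ repeatedly, from $t$ back to the boundary value $x(t-t^\beta\mathbf{1})=f_\beta(\dots)$: the product $A(t-\mathbf{1})\cdots A(t-t^\beta\mathbf{1})$ is the transition matrix along the diagonal, and the sum is the discrete Duhamel contribution of the inhomogeneity $b$. Checking that $x$ satisfies the recurrence then reduces to peeling off the single factor $A(t-\mathbf{1})$ and the single term $b(t-\mathbf{1})$, which turns the expression for $x(t+\mathbf{1})$ (whose minimizing index is the same as that of $t$) into $A(t)x(t)+b(t)$; the case $\mu(t)=1$ is exactly the one-step unfolding and dovetails with the $\mu(t)\ge 2$ case at the interface. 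Checking the initial conditions amounts to setting $t^\beta=0$, which collapses the product and the sum and returns $f_\beta$.

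The step I expect to be the main obstacle, and the only place where (\ref{ect1.1}) is genuinely needed, is the well-definedness of the construction when the minimum $\mu(t)$ is attained by more than one index. If $\mu(t)=t^\alpha=t^\beta$, the two formulas (one built from $f_\alpha$, the other from $f_\beta$) must yield the same vector; tracing both back $\mu(t)$ steps lands at the common point $t-\mu(t)\mathbf{1}$, which lies on the edge $\{t^\alpha=0\}\cap\{t^\beta=0\}$. Since $t^\alpha=t^\beta=\mu(t)$, the transition products and the Duhamel sums in the two expressions are literally the same, so the entire discrepancy reduces to comparing $f_\alpha$ and $f_\beta$ at $t-\mu(t)\mathbf{1}$, that is, on that edge; and this equality is exactly what the compatibility condition (\ref{ect1.1}) asserts. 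Once this consistency is in place, the formula defines a single-valued $m$-sequence, and the verifications above complete the proof.
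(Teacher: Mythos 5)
The paper itself contains no proof of this theorem: it is imported verbatim from \cite{GTU} (``In the paper \cite{GTU} one proves the next result'', with the statement closed by $\square$), so there is no in-paper argument to compare yours against. Judged on its own, your proposal is correct and is the natural argument: uniqueness by induction on $\mu(t)$, using $\mu(t-{\bf 1})=\mu(t)-1$ and the rewritten recurrence $x(t)=A(t-{\bf 1})x(t-{\bf 1})+b(t-{\bf 1})$; existence by taking the unfolded formulas as the definition of $x$ (together with the boundary data at points with $\mu(t)=0$) and peeling off one factor to recover the recurrence; and well-definedness when several indices realize the minimum, which you correctly identify as the place where the compatibility condition (\ref{ect1.1}) is essential---both candidate formulas share the same transition product and Duhamel sum, and their boundary values are compared at the common point $t-\mu(t)\cdot{\bf 1}$ on the intersection of the two faces, which is exactly what (\ref{ect1.1}) controls. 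One notational slip worth fixing: in verifying the recurrence, the factor peeled off the expression for $x(t+{\bf 1})$ is $A\big((t+{\bf 1})-{\bf 1}\big)=A(t)$ and the free term is $b(t)$, not $A(t-{\bf 1})$ and $b(t-{\bf 1})$; a fully written-out version should also display the reindexing $k\mapsto k-1$ of the sum. Neither affects the validity of the argument.
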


The function
\begin{equation*}
\Phi \colon \mathbb{N}^m \to \mathcal{M}_{n}(\mathbb{R}),
\quad
\Phi(t)=
\left\{
\begin{array}{ccc}
\displaystyle \prod_{k=1}^{\mu(t)}A(t-k \cdot  {\bf 1}),& \hbox{ if }& \mu(t) \geq 1, \\
I_n, & \hbox{ if }& \mu(t)=0,
\end{array}
\right.
\end{equation*}
is called the {\it fundamental matrix} ({\it transfer matrix}) associated to the recurrence (\ref{linomog}).
In fact, $\Phi(\cdot)$ is the unique matrix function which verifies the problem
\begin{equation}
\label{fundam}
\begin{split}
\Phi(t+{\bf 1})
&=
A(t)\Phi(t),
\quad \forall t \in \mathbb{N}^m,\\
\Phi(t)\Big|_{t^\beta=0}
&=
I_n,
\quad
\forall
(t^1,\ldots, \widehat{t^\beta}, \ldots, t^m)\in \mathbb{N}^{m-1},\\
\forall
\beta&\in \{1,2, \ldots, m\}.
\end{split}
\end{equation}
This follows applying the Theorem \ref{rec.t1}
for $n$ (vector) recurrences to whom is equivalent the problem
(\ref{fundam}) (one applies the Theorem \ref{rec.t1}
for each column of the matrix $\Phi(\cdot)$).

If the functions
$$
f_1, f_2, \ldots, f_m \colon \mathbb{N}^{m-1} \to \mathbb{R}^n
$$
verify the relations (\ref{ect1.1}),
then one observes (Theorem  \ref{rec.t1})
that the unique
$m$-sequence $x \colon \mathbb{N}^m \to \mathbb{R}^n$ which verifies
the recurrence (\ref{linomog}) and
\begin{equation*}
\begin{split}
x(t)\Big|_{t^\beta=0}
&=
f_j(t^1,\ldots, \widehat{t^\beta}, \ldots, t^m),
\quad
\forall
(t^1,\ldots, \widehat{t^\beta}, \ldots, t^m)\in \mathbb{N}^{m-1},\\
\forall
\beta&\in \{1,2, \ldots, m\},
\end{split}
\end{equation*}
can be written in the form
\begin{equation*}
x(t)=\Phi(t)
f_{\beta}(t^1-t^\beta, \ldots, \widehat{t^\beta}, \ldots, t^{m-1}-t^\beta),
\quad
\mbox{ if }\,\,
\mu(t)=t^\beta.
\end{equation*}

\vspace{0.2 cm}
If the matrix function $A(\cdot)$ is constant,
i.e., $A(t)=A$, $\forall t$, then the fundamental matrix becomes
$\displaystyle \Phi(t)=A^{\mu(t)}$.

A linear discrete multitime diagonal recurrence is called $T$-{\it diagonal-periodic}
($T\in \mathbb{N}^*$) if
\begin{equation} \label{multi} A(t+T\cdot {\bf 1})=A(t),\,\,\forall t\in \mathbb{N}^m.\end{equation}
This is the only multi-periodicity compatible to the diagonal recurrence, independently of the initial conditions.

\begin{proposition}
\label{rec.p3}
Suppose the recurrence {\rm (\ref{linomog})} is $T$-diagonal-periodic and we introduce
the function
\begin{equation*}
\widetilde{A} \colon \mathbb{N}^m \to \mathcal{M}_{n}(\mathbb{R}),
\quad
\widetilde{A}(t)=
\displaystyle \prod_{k=0}^{T-1}
A\big( t+(T-1-k)\cdot {\bf 1}\big),
\quad
\forall t \in \mathbb{N}^m.
\end{equation*}

Then, for any $k \in \mathbb{N}^*$, the fundamental matrix satisfies
\begin{equation}
\label{fiperiod}
\Phi\big( t+kT\cdot {\bf 1}\big)
=
\Phi(t)
\Big(
\widetilde{A}
\big( t-\mu(t) \cdot {\bf 1}\big)
\Big)^k.
\end{equation}
\end{proposition}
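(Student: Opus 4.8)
The plan is to fix $t \in \mathbb{N}^m$, set $p=\mu(t)$, and argue by induction on $k\in\mathbb{N}^*$. The crucial elementary observation is that adding $T\cdot{\bf 1}$ to every coordinate raises the minimum by exactly $T$, so that $\mu(t+kT\cdot{\bf 1})=p+kT$, while the ``distance to the coordinate hyperplanes'' is preserved:
\begin{equation*}
(t+T\cdot{\bf 1})-\mu(t+T\cdot{\bf 1})\cdot{\bf 1}=t-p\cdot{\bf 1}.
\end{equation*}
This invariance is precisely what makes the factor $\widetilde{A}(t-\mu(t)\cdot{\bf 1})$ reproduce itself after each shift by $T\cdot{\bf 1}$, and it is the backbone of the whole argument.

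First I would settle the base case $k=1$. Using the explicit form $\Phi(s)=\prod_{j=1}^{\mu(s)}A(s-j\cdot{\bf 1})$, ordered with decreasing index from left to right as dictated by $\Phi(s+{\bf 1})=A(s)\Phi(s)$, I expand $\Phi(t+T\cdot{\bf 1})$ into its $p+T$ factors
\begin{equation*}
\Phi(t+T\cdot{\bf 1})=\prod_{j=1}^{p+T}A\big(t+T\cdot{\bf 1}-j\cdot{\bf 1}\big),
\end{equation*}
and split this ordered product at $j=p$. For $1\le j\le p$ the $T$-diagonal-periodicity (\ref{multi}) gives $A(t+T\cdot{\bf 1}-j\cdot{\bf 1})=A(t-j\cdot{\bf 1})$, the argument $t-j\cdot{\bf 1}$ lying in $\mathbb{N}^m$ because $j\le p=\mu(t)$; hence these first $p$ factors reassemble exactly into $\Phi(t)$. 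For $p+1\le j\le p+T$, writing $r=j-p\in\{1,\dots,T\}$, the factors are $A(t+(T-p-r)\cdot{\bf 1})$, and running $r$ from $1$ to $T$ produces
\begin{equation*}
A\big(t+(T-1-p)\cdot{\bf 1}\big)\cdots A\big(t-p\cdot{\bf 1}\big)=\widetilde{A}\big(t-p\cdot{\bf 1}\big)
\end{equation*}
by the very definition of $\widetilde{A}$. Thus $\Phi(t+T\cdot{\bf 1})=\Phi(t)\,\widetilde{A}(t-\mu(t)\cdot{\bf 1})$.

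For the induction step I would assume (\ref{fiperiod}) for some $k\ge 1$ and apply it at the shifted point $t'=t+T\cdot{\bf 1}$. Since $\mu(t')=p+T$, the invariance above gives $t'-\mu(t')\cdot{\bf 1}=t-p\cdot{\bf 1}$, so the repeated factor is unchanged: $\widetilde{A}(t'-\mu(t')\cdot{\bf 1})=\widetilde{A}(t-p\cdot{\bf 1})$. Combining the hypothesis $\Phi(t'+kT\cdot{\bf 1})=\Phi(t')\big(\widetilde{A}(t-p\cdot{\bf 1})\big)^k$ with the base case $\Phi(t')=\Phi(t)\,\widetilde{A}(t-p\cdot{\bf 1})$ then yields $\Phi(t+(k+1)T\cdot{\bf 1})=\Phi(t)\big(\widetilde{A}(t-p\cdot{\bf 1})\big)^{k+1}$, which closes the induction.

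The point demanding the most care, and the main obstacle, is the non-commutativity of matrix multiplication: the factorization of $\Phi(t+T\cdot{\bf 1})$ must respect the left-to-right order, so that the block reconstructing $\Phi(t)$ sits on the left and the $\widetilde{A}$-block on the right, exactly matching the asserted product $\Phi(t)\,\widetilde{A}(\cdots)$ rather than its reverse. One must also check that every argument to which periodicity (\ref{multi}) is applied stays in $\mathbb{N}^m$: the estimate $t^i-j\ge\mu(t)-j\ge 0$ for $j\le p$ covers the reconstruction of $\Phi(t)$, while $t^i+(T-p-r)\ge T-r\ge 0$ covers the $\widetilde{A}$-block. Once these bookkeeping details are handled, the result follows purely from regrouping the diagonal product into one copy of $\Phi(t)$ followed by $k$ identical blocks of length $T$.
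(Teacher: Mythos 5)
Your proof is correct and follows essentially the same route as the paper's: induction on $k$, with the base case $k=1$ obtained by splitting the $\mu(t)+T$ ordered factors of $\Phi(t+T\cdot{\bf 1})$ into a leading block that $T$-diagonal-periodicity converts into $\Phi(t)$ and a trailing block of $T$ factors equal by definition to $\widetilde{A}\big(t-\mu(t)\cdot{\bf 1}\big)$. The only cosmetic differences are that you handle $\mu(t)=0$ and $\mu(t)\geq 1$ uniformly (the paper treats them as separate cases) and that in the induction step you apply the hypothesis at the shifted point $t+T\cdot{\bf 1}$ rather than the base case at $t+kT\cdot{\bf 1}$; both variants are immaterial since the repeated factor $\widetilde{A}\big(t-\mu(t)\cdot{\bf 1}\big)$ is invariant under shifts by multiples of $T\cdot{\bf 1}$.
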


\begin{proof}
If $t^\beta=\mu(t)$, then
$$t^\beta+kT=\min\{t^1+kT,t^2+kT,\ldots, t^m+kT\}.$$

Induction after $k$.
For $k=1$:

The case $t^\beta=0$:
$$
\Phi\big( t+T\cdot {\bf 1}\big)
=
A\big( t+(T-1)\cdot {\bf 1}\big)
A\big( t+(T-2)\cdot {\bf 1}\big)
\cdot \ldots \cdot
A\big( t+(T-T)\cdot {\bf 1}\big)
$$
$$
=
I_n\widetilde{A}(t)
=
\Phi(t)\widetilde{A}(t).
$$

Case $t^\beta\geq 1$:
$$
\Phi\big( t+T\cdot {\bf 1}\big)
=
A\big( t+T\cdot {\bf 1}- {\bf 1}\big)
A\big( t+T\cdot {\bf 1}-2\cdot {\bf 1}\big)
\cdot \ldots \cdot
A\big( t+T\cdot {\bf 1}-(t^\beta+T)\cdot {\bf 1}\big)
$$
$$
=
A\big( t+T\cdot {\bf 1}- {\bf 1}\big)
A\big( t+T\cdot {\bf 1}-2\cdot {\bf 1}\big)
\cdot \ldots \cdot
A\big( t+T\cdot {\bf 1}-t^\beta\cdot {\bf 1}\big)\cdot
$$
$$
\cdot
A\big( t-t^\beta\cdot {\bf 1}+(T-1)\cdot {\bf 1}\big)
A\big( t-t^\beta\cdot {\bf 1}+(T-2)\cdot {\bf 1}\big)
\cdot \ldots \cdot
A\big( t-t^\beta\cdot {\bf 1}+{\bf 1}\big)
A\big( t-t^\beta\cdot {\bf 1}\big)
$$
$$
=
A\big( t- {\bf 1}\big)
A\big( t-2\cdot {\bf 1}\big)
\cdot \ldots \cdot
A\big( t-t^\beta\cdot {\bf 1}\big)\cdot
$$
$$
\cdot
A\big( t-t^\beta\cdot {\bf 1}+(T-1)\cdot {\bf 1}\big)
A\big( t-t^\beta\cdot {\bf 1}+(T-2)\cdot {\bf 1}\big)
\cdot \ldots \cdot
A\big( t-t^\beta\cdot {\bf 1}+{\bf 1}\big)
A\big( t-t^\beta\cdot {\bf 1}\big)
$$
$$=\Phi(t)\widetilde{A}
\big( t-t^\beta\cdot {\bf 1}\big).
$$

Suppose the relation (\ref{fiperiod}) is true for any $k$, and we shall verify for $k+1$:
$$
\Phi\big( t+(k+1)T\cdot {\bf 1}\big)
=
\Phi\big( t+kT\cdot {\bf 1}+T\cdot {\bf 1}\big)
=
\Phi(t+kT\cdot {\bf 1})
\widetilde{A}
\big( t+kT\cdot {\bf 1}-(t^\beta+kT)\cdot {\bf 1}\big)
$$
$$
=
\Phi(t)
\Big(
\widetilde{A}
\big( t-t^\beta\cdot {\bf 1}\big)
\Big)^k
\widetilde{A}
\big( t-t^\beta \cdot {\bf 1}\big)
=
\Phi(t)
\Big(
\widetilde{A}
\big( t-t^\beta\cdot {\bf 1}\big)
\Big)^{k+1}.
$$
\end{proof}

Suppose that we are in the conditions of the Proposition \ref{rec.p3}.
The matrix function
\begin{equation*}
D \colon \mathbb{N}^m \to \mathcal{M}_{n}(\mathbb{R}),
\quad
D(t)=
\widetilde{A}\big( t-\mu(t) \cdot {\bf 1}\big),
\quad
\forall t\in \mathbb{N}^m,
\end{equation*}
is called the {\it monodromy matrix} associated to the
$T$-diagonal-periodic recurrence {\rm (\ref{linomog})}.
For $k=1$, the formula {\rm (\ref{fiperiod})} can be written
\begin{equation}
\label{fiCt}
\Phi\big( t+T\cdot {\bf 1}\big)
=
\Phi(t)D(t),
\quad
\forall t \in \mathbb{N}^m.
\end{equation}

We observe that the relation
$D\big( t+ {\bf 1}\big)
=D(t)$,\, $\forall t \in \mathbb{N}^m$
holds.

Moreover, let us suppose that, for any $t \in \mathbb{N}^m$, the matrix
$A(t)$ is invertible, hence $\widetilde{A}(t)$ is also invertible;
it follows that for each $t \in \mathbb{N}^m$,
there exists $\widetilde{B}(t) \in \mathcal{M}_{n}(\mathbb{C})$
(which is not unique), such that
$\displaystyle \widetilde{B}(t)^T=\widetilde{A}(t)$.
For each $t$, we fix such a matrix $\widetilde{B}(t)$;
obviously the matrix $\widetilde{B}(t)$ is invertible.

Define the function
\begin{equation}
\label{defB}
B \colon \mathbb{N}^m \to \mathcal{M}_{n}(\mathbb{C}),
\quad
B(t)=
\widetilde{B}\big( t-\mu(t) \cdot {\bf 1}\big),
\quad
\forall t\in \mathbb{N}^m.
\end{equation}
We observe that the relation
\begin{equation}
\label{Bconst}
B\big( t+ {\bf 1}\big)
=B(t),
\quad
\forall t \in \mathbb{N}^m
\end{equation}
is true, i.e., the matrix $B$ verifies a special recurrence. It follows immediately the relation
\begin{equation}
\label{BtCt}
D(t)=
B(t)^T,
\quad
\forall t \in \mathbb{N}^m.
\end{equation}
Moreover, one observes that if for any $t$, the matrix
$A(t)$ is invertible, then the matrices $\Phi(t)$ and $D(t)$
are invertible too.

\begin{theorem}
\label{rec.t3}
Let $m\geq2$ and $A \colon \mathbb{N}^m \to \mathcal{M}_{n}(\mathbb{R})$,
with $A(t)$ invertible, $\forall t \in \mathbb{N}^m$.
Suppose there exists an integer $T \geq 1$ such that the relation
{\rm (\ref{multi})} is true.

Then there exists a function
$P \colon \mathbb{N}^m \to \mathcal{M}_{n}(\mathbb{C})$,
with the property that
$P(t+T\cdot {\bf 1})=P(t)$, $\forall t$, and such that
the fundamental matrix of the $T$-multi-periodic recurrence
{\rm (\ref{linomog})} is written
\begin{equation}
\label{Pt}
\Phi(t)=P(t)B(t)^{\mu(t)},
\quad
\forall t \in \mathbb{N}^m
\end{equation}
$\big($where $B(\cdot)$ is the function definite by the formula {\rm (\ref{defB})}$\big)$.
\end{theorem}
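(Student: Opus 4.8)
The plan is to follow the classical Floquet strategy: in the continuous case one writes $\Phi(t)=P(t)e^{tR}$ and recovers the periodic factor $P$ by peeling off the monodromy growth. Here the role of $e^{tR}$ is played by $B(t)^{\mu(t)}$, so I would simply \emph{define} the candidate periodic factor by
\[
P(t):=\Phi(t)\,B(t)^{-\mu(t)},\qquad \forall t\in\mathbb{N}^m.
\]
This is legitimate because $A(t)$ is invertible for every $t$, hence $\Phi(t)$ and $D(t)=B(t)^{T}$ are invertible, and therefore $B(t)$ itself is invertible; the negative power $B(t)^{-\mu(t)}$ makes sense. With this definition the desired factorization (\ref{Pt}) holds by construction, since $B(t)^{-\mu(t)}B(t)^{\mu(t)}=I_n$, so no separate argument is needed for the decomposition itself.

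The entire content of the theorem therefore reduces to verifying the diagonal periodicity $P(t+T\cdot{\bf 1})=P(t)$. For this I would assemble three ingredients already available in the excerpt. First, $\mu(t+T\cdot{\bf 1})=\mu(t)+T$, since adding the same integer $T$ to every coordinate shifts the minimum by $T$. Second, iterating the relation (\ref{Bconst}) exactly $T$ times gives $B(t+T\cdot{\bf 1})=B(t)$. Third, combining (\ref{fiCt}) with (\ref{BtCt}) yields $\Phi(t+T\cdot{\bf 1})=\Phi(t)D(t)=\Phi(t)B(t)^{T}$.

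Feeding these into the definition of $P$, I would compute
\[
P(t+T\cdot{\bf 1})=\Phi(t+T\cdot{\bf 1})\,B(t+T\cdot{\bf 1})^{-\mu(t+T\cdot{\bf 1})}=\Phi(t)B(t)^{T}\,B(t)^{-(\mu(t)+T)}.
\]
Since for a \emph{fixed} $t$ all powers of the single matrix $B(t)$ commute, the factors $B(t)^{T}$ and $B(t)^{-T}$ cancel and one is left with $\Phi(t)B(t)^{-\mu(t)}=P(t)$, as required.

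I do not expect a genuine obstacle here: the only points demanding care are the bookkeeping of exponents and the observation that commutativity of powers of one fixed matrix is what legitimizes the cancellation. In particular I would check that the argument is uniform in the boundary regime $\mu(t)=0$, where $\Phi(t)=I_n$ so that $P(t)=I_n$, and where the same computation gives $P(t+T\cdot{\bf 1})=\Phi(t+T\cdot{\bf 1})B(t)^{-T}=B(t)^{T}B(t)^{-T}=I_n$, so the identity still closes. The substantive work — establishing $\Phi(t+T\cdot{\bf 1})=\Phi(t)B(t)^{T}$ — was already carried out in Proposition \ref{rec.p3} and in the passage to $B$, so the theorem is essentially a repackaging of those facts with the correct definition of $P$.
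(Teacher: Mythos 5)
Your proposal is correct and coincides with the paper's own proof: the authors also define $P(t)=\Phi(t)B(t)^{-\mu(t)}$ and verify $T$-periodicity by combining $\mu(t+T\cdot{\bf 1})=\mu(t)+T$, the relation (\ref{Bconst}) giving $B(t+T\cdot{\bf 1})=B(t)$, and $\Phi(t+T\cdot{\bf 1})=\Phi(t)B(t)^{T}$ obtained from (\ref{fiCt}) and (\ref{BtCt}), then cancelling powers of $B(t)$ exactly as you do. Your added remarks on invertibility of $B(t)$ and the boundary case $\mu(t)=0$ are harmless elaborations of what the paper leaves implicit.
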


\begin{proof}
Let $$P \colon \mathbb{N}^m \to \mathcal{M}_{n}(\mathbb{C}),$$
$$\displaystyle
P(t)=\Phi(t)B(t)^{-\min\{t^1,t^2,\ldots, t^m\}},\,\,
\forall t \in \mathbb{N}^m.$$
It is sufficient to show
$P(t+T\cdot {\bf 1})=P(t)$, $\forall t \in \mathbb{N}^m$.

Let $t=(t^1,t^2,\ldots, t^m)\in \mathbb{N}^m$
and $t^\beta=\mu(t)$. Obviously,
$$t^\beta+T=\min\{t^1+T,t^2+T,\ldots, t^m+T\},$$
$$P(t+T\cdot {\bf 1})=\Phi(t+T\cdot {\bf 1})B(t+T\cdot {\bf 1})^{-t^\beta-T}.$$
From the relation (\ref{Bconst}), we deduce
$B(t+k\cdot {\bf 1})=B(t)$, $\forall k \in \mathbb{N}$,
and particularly $B(t+T\cdot {\bf 1})=B(t)$.
From the relations (\ref{fiCt}), (\ref{BtCt}), it follows
$\Phi(t+T\cdot {\bf 1})=\Phi(t)B(t)^{T}$.
Hence
\begin{equation*}
P(t+T\cdot {\bf 1})
=
\Phi(t)B(t)^{T}B(t)^{-t^\beta-T}
=
\Phi(t)B(t)^{-t^\beta}
=P(t).
\end{equation*}
\end{proof}
In the condition of the Theorem \ref{rec.t3},
from the formula (\ref{Pt}) and the fact that the matrix $\Phi(t)$ is invertible,
it follows that $P(t)$ is also invertible.

The most important result of Floquet type is
\begin{theorem}
\label{rec.t4}
Let $m\geq2$ and $A \colon \mathbb{N}^m \to \mathcal{M}_{n}(\mathbb{R})$,
with $A(t)$ invertible, $\forall t \in \mathbb{N}^m$.
Suppose that there exists an integer $T \geq 1$ such that the relation
{\rm (\ref{multi})} is true.
Let $B\colon \mathbb{N}^m \to \mathcal{M}_{n}(\mathbb{C})$
be the function definite by the formula {\rm (\ref{defB})}.

We consider the recurrences
\begin{equation}
\label{ect4.1}
x(t+{\bf 1})=A(t)x(t),
\quad
\forall t \in \mathbb{N}^m;
\end{equation}
\begin{equation}
\label{ect4.2}
 y(t+{\bf 1})=B(t)y(t),
\quad
\forall t \in \mathbb{N}^m.
\end{equation}

If $y(t)$ is a solution of the recurrence
{\rm (\ref{ect4.2})}, then
$x(t):=P(t)y(t)$ is a solution of the recurrence
{\rm (\ref{ect4.1})}. And conversely, if
$x(t)$ is a solution of the recurrence
{\rm (\ref{ect4.1})}, then
$y(t):=P(t)^{-1}x(t)$ is a solution of the recurrence
{\rm (\ref{ect4.2})}.
\end{theorem}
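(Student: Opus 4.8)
The plan is to reduce the whole statement to a single intertwining identity linking $A(t)$, $B(t)$ and $P(t)$, after which both implications collapse to one-line substitutions. The crucial observation is that the Floquet factorization $\Phi(t)=P(t)B(t)^{\mu(t)}$ of Theorem~\ref{rec.t3}, fed into the defining recurrence $\Phi(t+{\bf 1})=A(t)\Phi(t)$ of the fundamental matrix, pins down exactly how $A(t)$ and $B(t)$ are conjugated by $P$.

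First I would derive this identity. Substituting the factorization into $\Phi(t+{\bf 1})=A(t)\Phi(t)$ gives, on the right, $A(t)P(t)B(t)^{\mu(t)}$. On the left I use two facts: the minimum increments by one under the diagonal shift, $\mu(t+{\bf 1})=\mu(t)+1$, and the ${\bf 1}$-periodicity $B(t+{\bf 1})=B(t)$ from (\ref{Bconst}). These yield
\begin{equation*}
\Phi(t+{\bf 1})=P(t+{\bf 1})B(t+{\bf 1})^{\mu(t)+1}=P(t+{\bf 1})B(t)^{\mu(t)+1}.
\end{equation*}
Equating the two expressions for $\Phi(t+{\bf 1})$ and cancelling the invertible factor $B(t)^{\mu(t)}$ on the right produces the intertwining identity
\begin{equation*}
A(t)P(t)=P(t+{\bf 1})B(t),\qquad\forall t\in\mathbb{N}^m. \tag{$\ast$}
\end{equation*}

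With $(\ast)$ available, the forward direction is immediate: if $y$ solves (\ref{ect4.2}) and $x(t):=P(t)y(t)$, then $x(t+{\bf 1})=P(t+{\bf 1})y(t+{\bf 1})=P(t+{\bf 1})B(t)y(t)=A(t)P(t)y(t)=A(t)x(t)$, so $x$ solves (\ref{ect4.1}). For the converse I invoke the invertibility of $P(t)$ noted after Theorem~\ref{rec.t3}: rewriting $(\ast)$ as $P(t+{\bf 1})^{-1}A(t)=B(t)P(t)^{-1}$ and setting $y(t):=P(t)^{-1}x(t)$ for a solution $x$ of (\ref{ect4.1}), I get $y(t+{\bf 1})=P(t+{\bf 1})^{-1}x(t+{\bf 1})=P(t+{\bf 1})^{-1}A(t)x(t)=B(t)P(t)^{-1}x(t)=B(t)y(t)$.

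The only genuinely delicate point is the derivation of $(\ast)$, and inside it the exponent bookkeeping: one must combine that $\mu$ increases by one under the shift with the ${\bf 1}$-periodicity of $B$, so that the powers $B(t)^{\mu(t)}$ align and can be cancelled. Everything else is formal substitution, and no separate treatment of the boundary case $\mu(t)=0$ is needed, since there $B(t)^{\mu(t)}=I_n$ and the cancellation in $(\ast)$ remains valid.
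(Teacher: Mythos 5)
Your proof is correct and follows essentially the same route as the paper: your intertwining identity $(\ast)$, $A(t)P(t)=P(t+{\bf 1})B(t)$, is exactly the relation $P(t+{\bf 1})B(t)P(t)^{-1}=A(t)$ that the paper verifies inside its chain of equivalences, using the same ingredients --- the factorization (\ref{Pt}), the periodicity (\ref{Bconst}), the fact that $\mu(t+{\bf 1})=\mu(t)+1$, and the fundamental-matrix recurrence $\Phi(t+{\bf 1})=A(t)\Phi(t)$. The only difference is organizational: you isolate the identity as a standalone step, which turns both implications into one-line substitutions, whereas the paper embeds the same computation in an equivalence chain starting from $y(t+{\bf 1})=B(t)y(t)$ and dismisses the converse with ``like it proves the reverse.''
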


\begin{proof}
Let $y(t)$ be a solution of the recurrence
{\rm (\ref{ect4.2})} and $x(t):=P(t)y(t)$; hence
$y(t):=P(t)^{-1}x(t)$.
We fix $t\in \mathbb{N}^m$ and $t^\beta=\mu(t)$. It follows
$$
y(t+{\bf 1})=B(t)y(t)
\Longleftrightarrow
P(t+{\bf 1})^{-1}x(t+{\bf 1})=B(t)P(t)^{-1}x(t)
$$
$$
\Longleftrightarrow
x(t+{\bf 1})=P(t+{\bf 1})B(t)P(t)^{-1}x(t).
$$
We use the formula (\ref{Pt}) and deduce that
the foregoing relations are equivalent to
$$
x(t+{\bf 1})=
\Phi(t+{\bf 1})B(t+{\bf 1})^{-t^\beta-1}
B(t)
B(t)^{t^\beta}
\Phi(t)^{-1}x(t).
$$
This relation is equivalent to (according the formula (\ref{Bconst}))
$$
x(t+{\bf 1})=
\Phi(t+{\bf 1})B(t)^{-t^\beta-1}
B(t)
B(t)^{t^\beta}
\Phi(t)^{-1}x(t)
$$
$$
\Longleftrightarrow
x(t+{\bf 1})=
\Phi(t+{\bf 1})
\Phi(t)^{-1}x(t)
\Longleftrightarrow
x(t+{\bf 1})=
A(t)\Phi(t)
\Phi(t)^{-1}x(t)
$$
$$
\Longleftrightarrow
x(t+{\bf 1})=
A(t)x(t).
$$

Like it proves the reverse.
\end{proof}

In this way, the Floquet point of view brings about an important simplification:
the linear diagonal recurrence system {\rm (\ref{ect4.1})}
is reduced to the linear diagonal recurrence system {\rm (\ref{ect4.2})},
with constant coefficients along ``diagonal lines".

\section{Two-time recurrence of way required}

Let $t\in \mathbb{N}^2$. A first order linear bi-recurrence of the form
$$x(t^1+1,0)= F_1((t^1,0),x(t^1,0)),\,\,x(t^1,t^2+1)= F_2((t^1,t^2),x(t^1,t^2)),$$
with initial condition $x(0,0)=x_0$, is called {\it two-time recurrence of way required}.
Of course, similar to this recurrence, we can consider {\it discrete multitime recurrences of way required}
or {\it discrete multitime nonholonomic recurrences}.

Suppose we have linear two-time recurrence of way required, with constant coefficients,
$$x(t^1+1,0)= A_1 x(t^1,0),\,\, x(t^1,t^2+1) =A_2 x(t^1,t^2).$$
\begin{theorem} The solution (imposed by the path) of a linear two-time recurrence of way required, with constant coefficients, is
$$x(t^1,t^2)= A_2^{t^2}\,A_1^{t^1}x_0.$$\end{theorem}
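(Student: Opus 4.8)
The plan is to exploit the fact that the prescribed path to an arbitrary node $(t^1,t^2)$ is unique: starting from the origin one first travels along the boundary axis $t^2=0$ as far as $(t^1,0)$ using the first rule, and then ascends in the $t^2$-direction from $(t^1,0)$ up to $(t^1,t^2)$ using the second rule. No other route is admissible, since the first rule only updates points on the axis $t^2=0$ while the second rule only advances the second coordinate. Accordingly I would organize the argument as two nested inductions following this path.

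First I would settle the behaviour on the axis $t^2=0$. By induction on $t^1$, applying $x(t^1+1,0)=A_1 x(t^1,0)$ repeatedly starting from the base case $x(0,0)=x_0$, one obtains
$$x(t^1,0)=A_1^{t^1}x_0,\qquad \forall t^1\in\mathbb{N}.$$
This is the one-dimensional constant-coefficient recurrence restricted to the boundary, so the computation is immediate.

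Next, for an arbitrary but fixed $t^1\in\mathbb{N}$, I would induct on $t^2$ using the second rule $x(t^1,t^2+1)=A_2 x(t^1,t^2)$. The base case $t^2=0$ is exactly the boundary value already computed, and each inductive step prepends one further factor of $A_2$ on the left. This yields
$$x(t^1,t^2)=A_2^{t^2}\,x(t^1,0)=A_2^{t^2}A_1^{t^1}x_0,\qquad \forall t^1,t^2\in\mathbb{N},$$
which is the asserted formula.

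Both steps are elementary, so there is no genuine analytic obstacle here; the formula is essentially forced by iterating the two update rules along the one admissible path. The only point deserving attention is that $A_1$ and $A_2$ need not commute, so the ordering $A_2^{t^2}A_1^{t^1}$ is not interchangeable with $A_1^{t^1}A_2^{t^2}$. This ordering is precisely what encodes the required way (axis first, then ascent), and it is exactly what distinguishes such a nonholonomic recurrence from the diagonal recurrence (\ref{linomog}) treated earlier, whose solution is symmetric in the time directions.
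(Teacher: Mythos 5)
Your proof is correct. The paper actually states this theorem without any proof at all, treating it as immediate; your two nested inductions --- first along the axis $t^2=0$ to obtain $x(t^1,0)=A_1^{t^1}x_0$, then in the $t^2$-direction for fixed $t^1$ to prepend the factor $A_2^{t^2}$ --- are precisely the argument the paper leaves implicit, and your remark that the ordering $A_2^{t^2}A_1^{t^1}$ is forced (since $A_1$ and $A_2$ need not commute) correctly identifies the one subtlety that distinguishes this nonholonomic, path-dependent recurrence from the diagonal recurrence treated in the earlier sections.
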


\section{Discrete multitime constant coefficients \\Samuelson-Hicks model}

We assume that $t=(t^1,...,t^m)\in \mathbb{N}^m$ is a discrete multitime. Having in mind the discrete single-time
Samuelson-Hicks model \cite{M}, we introduce a {\it discrete multitime Samuelson-Hicks like model} based on the following ingredients:
(i) two parameters, the first $\gamma$, called the {\it marginal propensity to consume}, subject to $0<\gamma < 1$, and
the second $\alpha$ as {\it decelerator} if $0<\alpha <1$, {\it keeper} if $\alpha =1$ or {\it accelerator} if $\alpha > 1$;
(ii) the multiple sequence $Y(t)$ means the {\it national income} and is the main endogenous variable, the multiple
sequence $C(t)$ is the {\it consumption}; (iii) we assume that multiple sequences $Y(t)$, $C(t)$ are non-negative.

We propose a {\it first order discrete multitime constant coefficients Samuelson-Hicks model} either
as first order diagonal recurrence system
$$\begin{pmatrix}
  Y(t+{\bf 1}) \\
  C(t+{\bf 1})
 \end{pmatrix}=
 \begin{pmatrix}
  \gamma+\alpha & -\frac{\alpha}{\gamma} \\
  \gamma & 0
 \end{pmatrix}
 \begin{pmatrix}
 Y(t) \\
  C(t)
 \end{pmatrix},\,\, t\in \mathbb{N}^m,$$
 \begin{equation}
 \label{h1}
 Y(t)|_{t^\alpha=0}= Y_{0\alpha}(t^1,...,\widehat{t^\alpha},...,t^{m}),\,\, C(t)|_{t^\alpha=0}= C_{0\alpha}(t^1,...,\widehat{t^\alpha},...,t^{m})
 \end{equation}
 \begin{equation}
 \label{h2}
 Y_{0\alpha}(t^1,...,\widehat{t^\alpha},...,t^{m})|_{t^\beta=0}=Y_{0\beta}(t^1,...,\widehat{t^\beta},...,t^{m})|_{t^\alpha=0}
 \end{equation}
 \begin{equation}
 \label{h3}
 C_{0\alpha}(t^1,...,\widehat{t^\alpha},...,t^{m})|_{t^\beta=0}=C_{0\beta}(t^1,...,\widehat{t^\beta},...,t^{m})|_{t^\alpha=0}
 \end{equation}
or as the second order homogeneous diagonal recurrence equation
\begin{equation}
\label{h4}
Y(t+{\bf 2})-(\gamma+\alpha)Y(t+{\bf 1})+\alpha Y(t)=0,\,\, t\in \mathbb{N}^m,
\end{equation}
\begin{equation}\label{h5}
Y(t)|_{t^\alpha=0}= Y_{0\alpha}(t^1,...,\widehat{t^\alpha},...,t^{m}),\,\,Y(t)|_{t^\alpha=1}= Y_{1\alpha}(t^1,...,\widehat{t^\alpha},...,t^{m})
\end{equation}
\begin{equation}\label{h6}
  Y_{0\alpha}(t^1,...,\widehat{t^\alpha},...,t^{m})|_{t^\beta=0}=Y_{0\beta}(t^1,...,\widehat{t^\beta},...,t^{m})|_{t^\alpha=0}
\end{equation}
\begin{equation}\label{h7}
  Y_{1\alpha}(t^1,...,\widehat{t^\alpha},...,t^{m})|_{t^\beta=1}=Y_{1\beta}(t^1,...,\widehat{t^\beta},...,t^{m})|_{t^\alpha=1}
\end{equation}
\begin{equation}\label{h8}
  Y_{0\alpha}(t^1,...,\widehat{t^\alpha},...,t^{m})|_{t^\beta=1}=Y_{1\beta}(t^1,...,\widehat{t^\beta},...,t^{m})|_{t^\alpha=0}.
\end{equation}

\subsection{Samuelson-Hicks discrete multitime recurrence\\ equation with constant coefficients}

\subsubsection{Characteristic equation}

Let us consider the second order homogeneous discrete multitime diagonal recurrence equation (\ref{h4}),
with constant coefficients $\gamma$ and $\alpha$. Looking for a solution of the form
$$Y(t) = v \lambda^{<\epsilon, t>},\,\,<\epsilon,{\bf 1}>=1,$$
it appears the characteristic equation
$$\lambda^{2}- (\gamma+\alpha)\lambda +\alpha =0.$$

The discriminant of this equation is $\delta= (\gamma+\alpha)^2-4 \alpha$.
If $\delta >0$, then we have real roots $\lambda_1>0, \lambda_2>0$;
if $\delta =0$, then $\lambda_1= \lambda_2> 0$; if $\delta<0$, then the roots $\lambda_1, \lambda_2$ are complex
conjugated with modulus $\sqrt{\alpha}$. The stability requires $|\lambda_i|<1,\,i=1,2$.

\subsubsection{Generating function}

{\bf First computation variant} We start with the recurrence
$$Y_{m+2\,n+2} - (\gamma + \alpha) Y_{m+1\,n+1} + \alpha Y_{mn}=0, \,m\geq 0,\, n\geq 0.$$
Our aim is to determine a closed form for the corresponding generating function
$$F(x,y)=\sum_{m=0,n=0}Y_{mn}x^my^n$$
(analytic in a neighborhood of the origin; it can often be
extended by analytic continuation) in terms of the given initial
conditions and the recurrence relation, and then to derive a direct access formula
for the coefficients $Y_{mn}$ of the development of $F(x,y)$ in Taylor series around origin.

Multiplying with $x^{m+2}y^{n+2}$ and summing, it follows
$$\sum_{m,n\geq 0}Y_{m+2\,n+2}x^{m+2}y^{n+2} -(\gamma + \alpha)\sum_{m,n\geq 0}Y_{m+1\,n+1}x^{m+2}y^{n+2}$$
$$ +\alpha \sum_{m,n\geq 0}Y_{mn}x^{m+2}y^{n+2}=0.$$
We replace each double infinite sum in this equation by an algebraic expression involving the generating function, respectively
$$
\sum_{m,n\geq 0}Y_{m+2\,n+2}x^{m+2}y^{n+2}=F(x,y)-\sum_{m\geq 0}Y_{m0}x^m -\sum_{n\geq 0}Y_{0n}y^n
+Y_{00}$$
$$-y \sum_{m\geq 1}Y_{m1}x^m -x \sum_{n\geq 1}Y_{1n}y^n +xy Y_{11},$$
$$\sum_{m,n\geq 0}Y_{m+1\,n+1}x^{m+2}y^{n+2}=xy\left(F(x,y)+Y_{00}-\sum_{m\geq 0}Y_{m0}x^m -\sum_{n\geq 0}Y_{0n}y^n \right)$$
$$\sum_{m,n\geq 0}Y_{mn}x^{m+2}y^{n+2}=x^2y^2F(x,y).$$
We denote
$$\phi_0(x)=\sum_{m\geq 0}Y_{m0}x^m,\,\psi_0(y) =\sum_{n\geq 0}Y_{0n}y^n,$$
$$\phi_1(x)=\sum_{m\geq 1}Y_{m1}x^m,\,\psi_1(y) =\sum_{n\geq 1}Y_{1n}y^n.$$
It follows the functional equation $Q(x,y) F(x,y)=G(x,y)$, which define the generating function
$F(x,y)=\frac{G(x,y)}{Q(x,y)},$ where
$$Q(x,y)= 1-(\gamma + \alpha)xy + \alpha x^2y^2,$$
$$G(x,y)=-(\gamma + \alpha)xy(\phi_0(x)+\psi_0(y)-Y_{00})-xy Y_{11}$$
$$ + y\phi_1(x)+x\psi_1(y)+(\phi_0(x)+\psi_0(y)-Y_{00}).$$

The polynomial $Q(x,y)$ is the transform in $\frac{1}{\lambda}$ of the characteristic polynomial of the recurrence.

We write $G(x,y)= K(x,y)-U(x,y)$, with $$K(x,y)= -(\gamma + \alpha)xy(\phi_0(x)+\psi_0(y)-Y_{00})-xy Y_{11}$$
and
$$U(x,y)=- y\phi_1(x)-x\psi_1(y)-(\phi_0(x)+\psi_0(y)-Y_{00}).$$
Then
$$K(x,y)= F(x,y)G(x,y)+U(x,y),$$
where $K(x,y)$ is a polynomial in $xy$, like $Q$, whose coefficients are functions depending on
the initial conditions. The function $U(x,y)$ is affine, with coefficients depending on the initial conditions.

Using the formal power series expansion
$$F(x,y)= G(x,y) \sum_{k=0}^{\infty} \left((\gamma + \alpha) - \alpha xy\right)^k\,x^ky^k,$$
we find the general term $Y_{mn}$ of the bivariate economic sequence.

{\bf Particular solution} If we use the following data
$$\begin{array}{ccccc}\phi_0(x)=Y_{00}&\Longleftrightarrow  & Y_{m0}=0 &\hbox{for}& m\geq 1\\
\psi_0(y)=Y_{00}&\Longleftrightarrow & Y_{0n}=0& \hbox{for}& n\geq 1\\
\phi_1(x)=x&\Longleftrightarrow & Y_{11}=1, Y_{m1}=0 &\hbox{for} &m\geq 2\\
\psi(y)=y&\Longleftrightarrow & Y_{11}=1, Y_{1n}=0&\hbox{for}& n\geq 2,\end{array}$$
we obtain
$$F(x,y)=\frac{xy(1-(\gamma + \alpha)) + Y_{00}}{1-(\gamma +\alpha)xy +\alpha x^2y^2}.$$
Denoting by $r_1, r_2$ the real roots of the polynomial  $Q(x,y)$, it follows the bivariate sequence
$$Y_{nn}= \frac{1}{\alpha}\left(\frac{A}{r_1^{n+1}}+\frac{B}{r_2^{n+1}}\right)\,\, \hbox{and}\,\,Y_{mn}=0,\,\, \hbox{for}\,\, m\not = n,$$
depending on the parameters $\alpha$ and $\gamma$.

{\bf Second computation variant} Let us start with the univariate Samuelson-Hicks recurrence
$$a_{n+2}-(\alpha+\gamma)a_{n+1}+\alpha a_n=0.$$
Using a method similar to the previous one, we obtain the generating function
$$F(x)=\frac{a_0+(a_1-(\alpha +\gamma)a_0)x}{1-(\alpha + \gamma)x +\alpha x^2},$$
where $a_0$, $a_1$ are fixed by the initial conditions.

For the bivariate recurrence, the general solution, via characteristic equation, is
$$Y_{mn} = c_{1}\lambda_1^{\min\{m,n\}} + c_{2} \lambda_2^{\min \{m,n\}},$$
where the coefficients $c_1$, $c_2$ are fixed by initial conditions.
Introducing the subsets $S_1 =\{ m\geq 0,\, n\geq 0,\,\, m\leq n\}$, $S_2 =\{ m\geq 0,\, n\geq 0,\,\, m > n\}$ of $\mathbb{N}^2$,
the generating function becomes
$$F(x,y)= \sum_{S_1}\left(c_{1}\lambda_1^m + c_{2} \lambda_2^m \right)(xy)^m y^{n-m}
 + \sum_{S_2}\left(c_{1}\lambda_1^n + c_{2} \lambda_2^n \right)(xy)^n x^{m-n}$$
$$= \sum_{k\geq 0}\left[\sum_{m\geq 0}\left(c_{1}\lambda_1^m + c_{2} \lambda_2^m \right)(xy)^m\right]y^k
+ \sum_{k\geq 1}\left[\sum_{n\geq 0}\left(c_{1}\lambda_1^n + c_{2} \lambda_2^n \right)(xy)^n\right]x^k.$$
Using the univariate generating function, we find
$F(x,y)=\frac{G(x,y)}{Q(x,y)},$ where
$$Q(x,y)=1-(\alpha + \gamma)xy +\alpha x^2y^2,$$
$$G(x,y)= \sum_{k\geq 0}\left[Y_{0k}+(Y_{1\,k+1}-(\alpha+\gamma)Y_{0k})xy\right] y^k$$
$$+ \sum_{k\geq 1}\left[Y_{k0}+(Y_{k+1\,1}-(\alpha+\gamma)Y_{k0})xy\right] x^k.$$

\section{Discrete multitime periodic coefficients \\Samuelson-Hicks model}

We have in mind the single-time case \cite{M} and similar ingredients with
those in Section $9$. Instead of constant parameters
$\gamma$ and $\alpha$, we use variable parameters $\gamma(t)$ and $\alpha(t)$,
with $\gamma(t)\in (0,1)$, $\alpha(t)>0$, which are supposed to be $T$-diagonal-periodic.

A {\it discrete multitime periodic coefficients Samuelson-Hicks model}
can be either the first order diagonal recurrence system
$$\begin{pmatrix}
  Y(t+{\bf 1}) \\
  C(t+{\bf 1})
 \end{pmatrix}=
 \begin{pmatrix}
  \gamma(t)+\alpha(t) & -\frac{\alpha(t)}{\gamma(t-{\bf 1})} \\
  \gamma(t) & 0
 \end{pmatrix}
 \begin{pmatrix}
 Y(t) \\
  C(t)
 \end{pmatrix}, \,\,t\in \mathbb{N}^m, $$
subject to the (initial and compatibility) conditions (\ref{h1}), (\ref{h2}), (\ref{h3}),
or the second order homogeneous diagonal recurrence equation
$$Y(t+{\bf 2})-(\gamma(t+{\bf 1})+\alpha(t+{\bf 1}))Y(t+{\bf 1})+\alpha({t+{\bf 1}}) Y(t)=0
,\,\, t\in \mathbb{N}^m,$$
subject to the (initial and compatibility) conditions (\ref{h5}), (\ref{h6}), (\ref{h7}), (\ref{h8}).

A prominent role  in the analysis of the foregoing recurrences is played by the
so-called {\it Floquet multipliers} of the second order homogeneous diagonal recurrence equation,
if its coefficients are $T$-diagonal-periodic.
We reconvert this recurrence into an equivalent diagonal recurrence system
$$\begin{pmatrix}
  Y(t+{\bf 1}) \\
  Z(t+{\bf 1})
 \end{pmatrix}=
 \begin{pmatrix}
 0&1 \\
 -\alpha(t+{\bf 1}) &\gamma({t+{\bf 1}}) + \alpha({t+{\bf 1}})
 \end{pmatrix}
 \begin{pmatrix}
 Y(t) \\
  Z(t)
 \end{pmatrix}, \,\,t\in \mathbb{N}^m.$$

The matrix of this diagonal recurrence system
$$A(t+{\bf 1})=\begin{pmatrix}
 0&1 \\
 -\alpha(t+{\bf 1}) &\gamma({t+{\bf 1}}) + \alpha({t+{\bf 1}})
 \end{pmatrix}$$
is $T$-diagonal-periodic.
A Floquet multiplier is nothing else than an eigenvalue of the matrix
$$
D(t)= \prod_{k=0}^{T-1}A \big( t+(T-k-\mu(t))\cdot {\bf 1} \big).
$$
This matrix is called {\it monodromy matrix} of the foregoing first order recurrence system.
It is a $2\times 2$- matrix with determinant
$$\Delta(t)=\det\,
D(t)= \alpha \big({t+(T-\mu(t))\cdot {\bf 1}} \big)
\cdot
\ldots \cdot \alpha \big({t+(1-\mu(t)) \cdot {\bf 1}}\big).
$$

\begin{theorem}
The Floquet multipliers of the second order homogeneous diagonal recurrence equation,
with diagonal-periodic coefficients, are the two roots of the quadratic equation
$z^{2}- z(Tr\,\,D)+ \Delta=0.$
These multipliers depend on \,$t^1-\mu(t),\ldots, t^{m}-\mu(t)$.
\end{theorem}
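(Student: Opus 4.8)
The plan is to split the statement into two claims: first, that the two Floquet multipliers are exactly the roots of $z^{2}-z(Tr\,D)+\Delta=0$; and second, that these multipliers depend only on the differences $t^1-\mu(t),\ldots,t^m-\mu(t)$.

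For the first claim I would simply invoke the definition together with the characteristic-polynomial formula for a $2\times 2$ matrix. By construction the Floquet multipliers are the eigenvalues of the monodromy matrix $D(t)$, and $D(t)$ is a $2\times2$ matrix. For any $M\in\mathcal{M}_2(\mathbb{C})$ one has $\det(zI_2-M)=z^{2}-(Tr\,M)z+\det M$, and the eigenvalues of $M$ are precisely the roots of this polynomial. Taking $M=D(t)$ and recalling that $\Delta(t)=\det D(t)$ by definition, the two multipliers are exactly the roots of $z^{2}-z(Tr\,D)+\Delta=0$. This step is a restatement of the $2\times2$ characteristic-polynomial identity and needs no computation beyond noting that the dimension is $2$.

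To make the determinant explicit and recover the displayed formula for $\Delta$, I would use multiplicativity of the determinant together with the fact that each factor of $D(t)$ is a copy of the recurrence matrix $A(s)=\begin{pmatrix} 0 & 1 \\ -\alpha(s) & \gamma(s)+\alpha(s) \end{pmatrix}$, for which $\det A(s)=\alpha(s)$. Hence $\Delta(t)=\det D(t)=\prod_{k=0}^{T-1}\alpha\big(t+(T-k-\mu(t))\cdot{\bf 1}\big)$, which is exactly the stated product $\alpha\big(t+(T-\mu(t))\cdot{\bf 1}\big)\cdots\alpha\big(t+(1-\mu(t))\cdot{\bf 1}\big)$. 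For the second claim the decisive observation is the identity $D(t)=\widetilde{A}\big(t-\mu(t)\cdot{\bf 1}\big)$ from the definition of the monodromy matrix, equivalently the relation $D(t+{\bf 1})=D(t)$ already noted in the excerpt. Since the coordinates of $t-\mu(t)\cdot{\bf 1}$ are precisely $\big(t^1-\mu(t),\ldots,t^m-\mu(t)\big)$, the matrix $D(t)$ factors through the projection $t\mapsto\big(t^1-\mu(t),\ldots,t^m-\mu(t)\big)$; consequently $Tr\,D$, $\Delta$, and the eigenvalues all depend on $t$ only through these differences. The multipliers, being roots of a quadratic whose coefficients are functions of $\big(t^1-\mu(t),\ldots,t^m-\mu(t)\big)$, inherit the same dependence.

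The main obstacle here is bookkeeping rather than anything conceptual. One must reconcile the indexing of $D(t)$ used in this section, where the product runs over $A\big(t+(T-k-\mu(t))\cdot{\bf 1}\big)$, with the convention $D(t)=\widetilde{A}\big(t-\mu(t)\cdot{\bf 1}\big)$ of the general theory; the apparent shift by ${\bf 1}$ comes from writing the recurrence matrix as $A(t+{\bf 1})$ in the Samuelson--Hicks setting. Once this normalization is fixed consistently, both the determinant formula and the dependence statement follow at once, so the argument is short.
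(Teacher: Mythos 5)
Your proposal is correct, and it matches what the paper intends: the paper states this theorem without any proof, treating it as immediate from the definitions, and your argument (Floquet multipliers are by definition eigenvalues of the $2\times 2$ monodromy matrix $D(t)$, hence roots of $z^{2}-(Tr\,D)z+\det D=0$ with $\det D=\Delta$ by multiplicativity since $\det A(s)=\alpha(s)$, while $D(t)=\widetilde{A}\big(t-\mu(t)\cdot{\bf 1}\big)$ forces all quantities to depend on $t$ only through $t^1-\mu(t),\ldots,t^m-\mu(t)$) is exactly the omitted verification. Your remark about the index shift by ${\bf 1}$ relative to the general theory, caused by the Samuelson--Hicks matrix being written as $A(t+{\bf 1})$, correctly resolves the only bookkeeping discrepancy between the two sections.
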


For the constant coefficients recurrence equation, the foregoing equation is reduced to
$z^{2}-(\gamma+\alpha)z+\alpha=0$
and the Floquet multipliers are constants.

\section{Conclusions}
This paper presents original results regarding the multivariate recurrence equations as continuation of \cite{GTU}.
Our approach to multivariate recurrence equations is advantageous for practical problems.
The original results have a great potential to solve problems in various areas such as
ecosystem dynamics, financial modeling, and economics.

Though the techniques for deriving multivariate generating functions are sometimes
paralleling the univariate theory, they achieve surprising depth in many problems.
Analytic methods for recovering coefficients of generating functions once the functions have been derived
have, however, been sorely lacking.

The authors lay no claims to the paper's being a complete presentation of all current methods
for investigation the linear multivariate recurrences with periodic coefficients. The material
presented here is a reflection of our scientific interests regarding Floquet theory.

We hope to recruit further researchers into multivariate recurrences, which still have many interesting
challenges to offer, and this explains the rather comprehensive nature of the paper.

\section*{Acknowledgments}

Partially, the work has been funded by the Sectoral Operational Programme Human Resources
Development 2007-2013 of the Ministry of European Funds through
the Financial Agreement POSDRU/159/1.5/S/132395.

Partially supported by University Politehnica of Bucharest and by Academy of Romanian Scientists.

Parts of this paper were presented at X-th International Conference
on Finsler Extensions of Relativity Theory (FERT 2014) August 18-24, 2014, Bra\c sov, Romania
and at The VIII-th International Conference ``Differential Geometry and Dynamical Systems"
(DGDS-2014), 1 - 4 September 2014, Mangalia, Romania.

Authors' Address: University Politehnica of Bucharest\\
Faculty of Applied Sciences\\
Splaiul Independentei 313\\
Bucharest 060042, Romania\\

Cristian Ghiu\\
Department of Mathematical Methods and Models\\
Email: crisghiu@yahoo.com
\vspace{0.2cm}

Raluca Tulig\u{a} (Coad\u{a}), Constantin Udri\c ste, Ionel \c Tevy\\
Department of Mathematics and Informatics\\
Emails: ralucacoada@yahoo.com; udriste@mathem.pub.ro; vascatevy@yahoo.fr

\end{document}